\newtheorem{lemma}{Lemma}[section]
\newtheorem{remark}{Remark}[section]
\newtheorem{theorem}{Theorem}[section]
\newtheorem*{theorem*}{Theorem}
\newcommand{\R}{\mathbb{R}}
\newcommand{\N}{\mathbb{N}}
\newcommand{\be}{\begin{equation}}
\newcommand{\ee}{\end{equation}}
\newcommand{\g}{\gamma}
\numberwithin{equation}{section}
\begin{document}

\title[On weighted inequalities for fractional integrals \dots]{On weighted
inequalities for fractional integrals of radial functions}

\author{Pablo L.  De N\'apoli}
\address{Departamento de Matem\'atica, Facultad de Ciencias Exactas y Naturales,
Universidad de Buenos Aires, 1428 Buenos Aires, Argentina}
\email{pdenapo@dm.uba.ar}

\author{Irene Drelichman}
\address{Departamento de Matem\'atica, Facultad de Ciencias Exactas y Naturales,
Universidad de Buenos Aires, 1428 Buenos Aires, Argentina}
\email{irene@drelichman.com}

\author{Ricardo G. Dur\'an}
\address{Departamento de Matem\'atica, Facultad de Ciencias Exactas y Naturales,
Universidad de Buenos Aires, 1428 Buenos Aires, Argentina}
\email{rduran@dm.uba.ar}

\thanks{Supported by ANPCyT under grant PICT 01307 and by Universidad de
Buenos Aires under grants X070 and X837. The first and third authors are members of
CONICET,
Argentina.}

\keywords{fractional integrals, HLS inequality, power weights}
\subjclass[2000]{26D10, 47G10, 31B10}

\begin{abstract}
We prove a weighted version of the Hardy-Littlewood-Sobolev inequality for
radially symmetric functions, and show that the range of admissible power
weights appearing in the classical inequality due to Stein and Weiss 
can be improved in this particular case.
\end{abstract}
\maketitle
\section{Introduction}
Consider the fractional integral operator
$$
(T_\gamma v)(x)= \int_{\R^n} \frac{v(y)}{|x-y|^\gamma} \; dy, \quad
0 <\gamma < n.
$$

Weighted estimates for this operator (also called weighted
Hardy-Littlewood-Sobolev inequalities) go back to G. H. Hardy and J. E.
Littlewood in the 1-dimensional case \cite{HL}, and were generalized to the
space $\mathbb{R}^n, n\ge 1$ by E. M. Stein and G. Weiss in the following
celebrated result:

\begin{theorem}\cite[Theorem B*]{SW}
\label{stein-weiss}
Let $n\ge 1$, $0<\gamma<n, 1<p<\infty, \alpha<\frac{n}{p'},
\beta<\frac{n}{q}, \alpha + \beta \ge 0$, and
$\frac{1}{q}=\frac{1}{p}+\frac{\gamma+\alpha+\beta}{n}-1$. If $p\le
q<\infty$, then the inequality
$$
\||x|^{-\beta}T_\gamma v\|_{L^q(\mathbb{R}^n)} \le C \||x|^\alpha
v\|_{L^p(\mathbb{R}^n)}
$$
holds for any $v\in L^p(\mathbb{R}^n, |x|^{p\alpha} dx)$, where $C$ is
independent of $v$.
\end{theorem}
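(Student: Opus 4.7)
\noindent\emph{Proof sketch.} After substituting $u(y)=|y|^\alpha v(y)$, the theorem is equivalent to the $L^p(\R^n)\to L^q(\R^n)$ boundedness of
\[
Su(x)\;:=\;|x|^{-\beta}\int_{\R^n}|x-y|^{-\gamma}|y|^{-\alpha}u(y)\,dy.
\]
I would split $Su=S_1u+S_2u+S_3u$ according to the three regions $\{|y|<|x|/2\}$, $\{|x|/2\le|y|\le 2|x|\}$, and $\{|y|>2|x|\}$, and estimate each piece separately, in each case reducing either to a classical one-dimensional weighted Hardy inequality or to the unweighted Hardy--Littlewood--Sobolev inequality (which is the $\alpha=\beta=0$ instance of the theorem).

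For the ``off-diagonal'' pieces $S_1$ and $S_3$ the kernel degenerates, because $|x-y|\sim\max(|x|,|y|)$ on the corresponding regions. Thus, for example,
\[
|S_1 u(x)|\;\lesssim\;|x|^{-\beta-\gamma}\int_{|y|<|x|/2}|y|^{-\alpha}|u(y)|\,dy,
\]
and taking $L^q$ norms in polar coordinates and applying Minkowski's inequality in the angular variable would reduce the desired bound to a one-dimensional weighted Hardy inequality on $(0,\infty)$, whose constant is finite precisely under the hypothesis $\alpha<n/p'$ combined with the scaling relation on $p,q,\gamma,\alpha,\beta$. A symmetric (dual) argument handles $S_3$ and uses $\beta<n/q$ instead.

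For the ``near-diagonal'' piece $S_2$ the comparability $|x|\sim|y|$ combined with $\alpha+\beta\ge 0$ permits replacing $|x|^{-\beta}|y|^{-\alpha}$ by a constant multiple of $|x|^{-\alpha-\beta}$. A dyadic annular decomposition $\R^n=\bigcup_{k\in\mathbb{Z}}A_k$ with $A_k=\{2^k\le|x|<2^{k+1}\}$ makes this weight essentially constant on each annulus; for $x\in A_k$ the relevant $y$'s lie in $A_{k-1}\cup A_k\cup A_{k+1}$. I would then apply the unweighted Hardy--Littlewood--Sobolev inequality on each annulus for the kernel $|x-y|^{-\gamma}$ and reassemble the resulting local bounds into a global one.

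The main technical obstacle will be precisely this last assembly step. Unweighted HLS on each annulus produces an $L^{q_0}$ bound with $1/q_0=1/p+\gamma/n-1$, whereas the target exponent satisfies $1/q=1/q_0+(\alpha+\beta)/n\ge 1/q_0$, so $q\le q_0$. The gap is bridged by a H\"older step on the bounded set $A_k$, trading an extra factor $|A_k|^{1/q-1/q_0}\sim 2^{k(\alpha+\beta)}$ against the annular value $\sim 2^{-k(\alpha+\beta)}$ of the weight $|x|^{-\alpha-\beta}$; the two cancel exactly, leaving a sequence $\{\|u\|_{L^p(A_k)}\}_k$ to be summed in $\ell^q$, which is where the hypothesis $p\le q$ becomes essential via the embedding $\ell^p\hookrightarrow\ell^q$.
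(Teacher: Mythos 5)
Your three-region decomposition is a legitimate and well-known route to the Stein--Weiss theorem, and it is genuinely different from anything in this paper: Theorem \ref{stein-weiss} is quoted from Stein and Weiss and not reproved here; the authors only remark that their own method (rewriting $T_\gamma$ as a convolution on a multiplicative group and applying the weak-type Young inequality, Theorem \ref{young}) can be adapted to recover it. Your treatment of the off-diagonal pieces $S_1$ and $S_3$ is sound: on those regions $|x-y|\sim\max(|x|,|y|)$, and the resulting one-dimensional weighted Hardy inequalities hold for $p\le q$ precisely when $\alpha<n/p'$ (for $S_1$) and $\beta<n/q$ (for $S_3$), the Muckenhoupt--Bradley balance condition being exactly the scaling relation. (Minor point: for the reduction to one dimension you want H\"older on the sphere in the $y$-integral rather than Minkowski, since $S_1u$ is already controlled by a radial majorant.)

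The gap is in the near-diagonal piece $S_2$. You invoke the unweighted HLS inequality to get an $L^{q_0}$ bound with $\frac{1}{q_0}=\frac{1}{p}+\frac{\gamma}{n}-1$, but this is only a legitimate HLS exponent when $\frac{1}{q_0}>0$, i.e.\ when $\gamma>\frac{n}{p'}$. That is \emph{not} implied by the hypotheses: the scaling relation only forces $\gamma+\alpha+\beta>\frac{n}{p'}$, so $\gamma$ can be small when $\alpha+\beta$ is large. For instance $n=3$, $\gamma=1$, $p=q=2$, $\alpha=\beta=1$ satisfies every hypothesis of Theorem \ref{stein-weiss}, yet $\frac{1}{q_0}=\frac12+\frac13-1<0$, so the HLS step on each annulus is vacuous and the subsequent H\"older/cancellation bookkeeping never gets started. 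The repair is to replace HLS by Young's inequality on the annulus: for $x\in A_k$ and $y\in A_{k-1}\cup A_k\cup A_{k+1}$ one has $|x-y|\lesssim 2^k$, and the truncated kernel $|z|^{-\gamma}\chi_{\{|z|\lesssim 2^k\}}$ belongs to $L^{s}$ with $\frac{1}{s}=1+\frac{1}{q}-\frac{1}{p}$ whenever $\gamma s<n$, which under the scaling relation is exactly $\alpha+\beta>0$ (for $\alpha+\beta=0$ one has $\gamma s=n$ and must use the weak norm $L^{s,\infty}$ together with the strengthened Young inequality, valid since $1<p,q,s<\infty$). The $L^s$ norm of the truncated kernel is $\sim 2^{k(n/s-\gamma)}=2^{k(\alpha+\beta)}$, which cancels the annular weight $2^{-k(\alpha+\beta)}$ directly, with no intermediate exponent $q_0$; the final $\ell^p\hookrightarrow\ell^q$ summation then goes through as you describe. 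This corrected version of the near-diagonal estimate is in substance the mechanism the paper itself uses for the radial case near $r=1$.
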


Inequalities for the fractional integral with general weights were later studied
by several people, see for example \cite{SaW} and references therein. In
particular, it can be deduced from this theory (e.g., \cite[Theorem 1]{SaW})
that if we restrict ourselves to power weights, the previous theorem cannot be
improved in general.

However, if we reduce ourselves to radially symmetric functions, it is possible
to obtain a wider range of exponents for which the fractional integral is
continuous with power weights.  This is of particular interest for some
applications to partial differential equations (see, e.g. \cite{HK, V}).
More precisely, our main theorem is:

\begin{theorem}
\label{main-theorem}
Let $n\ge 1$, $0<\gamma<n, 1<p<\infty, \alpha<\frac{n}{p'},
\beta<\frac{n}{q}, \alpha + \beta \ge (n-1)(\frac{1}{q}-\frac{1}{p})$, and
$\frac{1}{q}=\frac{1}{p}+\frac{\gamma+\alpha+\beta}{n}-1$.  If $p\le
q<\infty$, then the inequality
$$
\||x|^{-\beta}T_\gamma v\|_{L^q(\mathbb{R}^n)} \le C \||x|^\alpha
v\|_{L^p(\mathbb{R}^n)}
$$
holds for all radially symmetric $v\in L^p(\mathbb{R}^n, |x|^{p\alpha} dx)$,
where $C$ is independent of $v$.
\end{theorem}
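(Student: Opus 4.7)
My plan is to use the radial symmetry to reduce the problem to a one-dimensional weighted inequality on $(0,\infty)$, and then analyze the reduced kernel by dyadic decomposition. For $v(y)=v_0(|y|)$, polar coordinates give
\be
T_\g v(x)=\int_0^\infty v_0(s)\,s^{n-1}\,K(|x|,s)\,ds,\qquad K(r,s)=\int_{S^{n-1}}|re_1-s\omega|^{-\g}\,d\sigma(\omega),
\ee
so the target inequality becomes an $L^p(s^{n-1+p\alpha}ds)\to L^q(r^{n-1-q\beta}dr)$ estimate on $(0,\infty)$ with kernel $s^{n-1}K(r,s)$.

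Next I would split the $s$-integration into three regions $s<r/2$, $r/2\le s\le 2r$, $s>2r$, producing operators $T^{(1)},T^{(2)},T^{(3)}$. In the two off-diagonal pieces one has $|x-y|\gtrsim\max(|x|,|y|)$, hence $K(r,s)\lesssim\max(r,s)^{-\g}$, and $T^{(1)},T^{(3)}$ are controlled by the Hardy-type averaging operators
\[
r^{-\g}\int_0^{r/2}v_0(s)s^{n-1}\,ds \quad\text{and}\quad \int_{2r}^\infty v_0(s)s^{n-1-\g}\,ds,
\]
whose weighted $L^p\to L^q$ bounds follow from classical one-dimensional Hardy inequalities with power weights. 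Only the conditions $\alpha<n/p'$, $\beta<n/q$ and the scaling identity enter at this stage; the improved constraint on $\alpha+\beta$ is not used here.

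The crux is the near-diagonal operator $T^{(2)}$. Parametrizing the sphere by $\omega_1=\cos\theta$ and using the identity $|re_1-s\omega|^2=(r-s)^2+4rs\sin^2(\theta/2)$, a careful estimate of the resulting angular integral (splitting at $\sin(\theta/2)\sim|r-s|/\sqrt{rs}$) yields
\[
K(r,s)\le C\,(rs)^{-(n-1)/2}\,|r-s|^{-(\g-(n-1))}
\]
when $\g>n-1$, together with bounded or logarithmic analogues when $\g\le n-1$. Setting $\tilde v(s)=v_0(s)s^{(n-1)/2}$, the operator $T^{(2)}$ is then dominated by $r^{-(n-1)/2}$ times a one-dimensional truncated fractional integral of order $n-\g$ applied to $\tilde v$. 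Applying the classical one-dimensional Stein--Weiss inequality (Theorem~\ref{stein-weiss} with $n=1$) with shifted exponents $\tilde\alpha=\alpha+(n-1)(1/p-1/2)$ and $\tilde\beta=\beta+(n-1)(1/2-1/q)$, the one-dimensional admissibility condition $\tilde\alpha+\tilde\beta\ge 0$ becomes precisely $\alpha+\beta\ge(n-1)(1/q-1/p)$, which is exactly the improved hypothesis of the theorem.

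The main obstacle is Step~3: obtaining the sharp near-diagonal estimate for $K(r,s)$ and handling uniformly the three regimes $\g$ smaller than, equal to, or larger than $n-1$. A secondary difficulty is that a naive reduction to the one-dimensional Stein--Weiss inequality produces the conditions $\tilde\alpha<1/p'$ and $\tilde\beta<1/q$, which translate to bounds strictly more restrictive than the stated $\alpha<n/p'$ and $\beta<n/q$; bridging this gap will require an additional splitting near the boundary of $T^{(2)}$ (absorbing the excess into the already-controlled Hardy-type pieces) or a direct Schur-test argument tailored to the kernel $L(r,s)=r^{(n-1)/q-\beta}s^{(n-1)/p'-\alpha}K(r,s)$, which is homogeneous of degree $-1+1/p-1/q$ exactly as required for an $L^p\to L^q$ scaling-invariant operator on the half-line.
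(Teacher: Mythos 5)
Your reduction to a one--dimensional kernel on $(0,\infty)$ is sound, and your near--diagonal bound $K(r,s)\lesssim (rs)^{-(n-1)/2}|r-s|^{-(\g-n+1)}$ for $r\sim s$ and $\g>n-1$ is correct (it is equivalent to the paper's Lemma \ref{lemma42} with $k=\frac{n-3}{2}$, so that $-\g+2k+2=-\g+n-1$); the off--diagonal Hardy pieces do indeed close under $\alpha<\frac{n}{p'}$ and $\beta<\frac{n}{q}$ alone. Your organization is genuinely different from the paper's: the paper writes $\rho^{\frac{n}{q}-\beta}T_\g v$ as a single convolution on the multiplicative group $(\R^+,\frac{dr}{r})$ and applies the weak--type Young inequality, so that your three regions are absorbed into one weak-$L^s$ estimate for the kernel $r^{\frac{n}{q}-\beta}I_{\g,k}(r)$, whereas you decompose into Hardy pieces plus a near--diagonal piece.

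There is, however, a genuine gap at the central step. The one concrete mechanism you propose for $T^{(2)}$ --- the one--dimensional Stein--Weiss theorem applied to $\tilde v=v_0(s)s^{(n-1)/2}$ with shifted exponents --- provably does not cover the stated range: as you note, it forces $\tilde\alpha<\frac{1}{p'}$ and $\tilde\beta<\frac{1}{q}$, i.e.\ $\alpha<\frac{n+1}{2}-\frac{n}{p}$ and $\beta<\frac{n}{q}-\frac{n-1}{2}$, strictly stronger than the hypotheses when $n>1$. The two repairs you sketch are not carried out, and the second (a Schur test on a kernel homogeneous of degree $-1+\frac1p-\frac1q$) does not apply as stated when $p<q$, since the homogeneous Schur test is an $L^p\to L^p$ device. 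The missing step is exactly the one where the new hypothesis $\alpha+\beta\ge(n-1)(\frac1q-\frac1p)$ must enter; it can be completed (on the support $s\sim r$ the weights satisfy $r^{-\tilde\beta}s^{-\tilde\alpha}\lesssim|r-s|^{-\tilde\alpha-\tilde\beta}$ precisely when $\tilde\alpha+\tilde\beta\ge0$, reducing the truncated kernel to $|r-s|^{-(1+\frac1q-\frac1p)}\chi_{\{|r-s|\le r\}}$, handled by unweighted one--dimensional HLS for $p<q$ and by a homogeneous Schur test for $p=q$, where $\tilde\gamma<1$ automatically), but as written this argument is absent. You would also need to actually prove the near--diagonal kernel bound uniformly in the regimes $\g<n-1$, $\g=n-1$, $\g>n-1$, including the half--integer case $k=\frac{n-3}{2}\notin\N_0$ (even $n$), which is where the paper's Lemma \ref{lemma42} spends most of its effort.
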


\begin{remark}
\label{rem1}
If $p=1$, then the result of Theorem \ref{main-theorem} holds for $\alpha +
\beta > (n-1)(\frac{1}{q}-\frac{1}{p})$ as may be seen from the proof of the
Theorem.
\end{remark}

\begin{remark}
\label{remark-alfabeta}
When $\gamma\le n-1$, the condition
$\frac{1}{q}=\frac{1}{p}+\frac{\gamma+\alpha+\beta}{n}-1$ automatically implies
$\alpha + \beta \ge (n-1)(\frac{1}{q}-\frac{1}{p})$.
\end{remark}

\begin{remark}
It is worth noting that if $n=1$ or $p=q$, Theorem \ref{main-theorem} gives the
same range of exponents as Theorem \ref{stein-weiss}.
\end{remark}

Our method of proof can also  be used, with slight modifications, to re-obtain
the general case of Stein and Weiss' Theorem. Since this result is not new and
the main ideas needed will already appear in our proof of the radial case, we
leave the details to the reader.

Previous results in the direction of Theorem \ref{main-theorem} include the
works of  M. C. Vilela, who made a proof for the case $p<q$ and $\beta=0$ in
\cite[Lemma 4]{V}; the work of G. Gasper, K. Stempak and W. Trebels \cite[Theorem 3.1]{Ga}, who proved a  fractional integration theorem in the context of Laguerre expansions which in the particular case of radial functions in $\mathbb{R}^n$ gives Theorem  \ref{main-theorem}  for $\alpha+\beta\ge 0$;
and the work of K. Hidano and Y. Kurokawa \cite[Theorem
2.1]{HK}, who proved Theorem  \ref{main-theorem}  for $p< q$ under the
stronger condition $\beta<\frac{1}{q}$. Notice that  this restriction,
together with the additional assumptions on $\alpha$ and $\beta$ implies
$n-1<\gamma<n$, whereas our conditions on $\alpha$ and $\beta$ allow for
$0<\gamma<n$, which is the natural range of $\gamma$'s for the fractional
integral.  This is because the proof  of Hidano and Kurokawa reduces to the
1-dimensional case of the Stein-Weiss theorem, while our method of proof is
completely different. Morover, our proof is simpler than that in  \cite{HK},
particularly when $n=2$.

The rest of the paper is organized as follows: in Section 2 we  recall some
definitions and preliminary results that will be needed in this paper. In
Section 3 we prove Theorem \ref{main-theorem} in the case $n=1$. As we have
already pointed out, in this case our range of weights coincides with that of
Stein and Weiss (and Hardy and Littlewood) and, therefore, the assumption that
$v$ be radially symmetric (i.e., even) is unnecessary. Although this result is
not new, we have included it because the proof we provide is very simple and
uses some of the ideas that we will use to prove the general theorem. Section 4
is devoted to the proof of Theorem  \ref{main-theorem} in the general case, and
we show, by means of an example when $n=3$, that the condition on $\alpha+\beta$
is sharp. Finally, in Section 5 we use Theorem  \ref{main-theorem}  to obtain a
weighted imbedding theorem for radially symmetric functions.

\section{Preliminaries}
Let $X$ be a measure space and $\mu$ be a positive measure on $X$. Recall that
if $f$ is a measurable function, its distribution function $d_f$ on $[0,\infty)$
is defined as
$$
d_f(\alpha) = \mu ( \{ x\in X: |f(x)|>\alpha \}).
$$
For $0<p<\infty$ the space weak-$L^p(X,\mu)$ is defined as the set of all
$\mu$-measurable functions $f$ such that $\|f\|_{L^{p,\infty}}$ is finite, where
$$
\|f\|_{L^{p,\infty}} = \inf \{ÊC>0 : d_f(\alpha)\le (C\alpha^{-1})^p \mbox{
for all } \alpha>0 \}.
$$

If $G$ is a locally compact group, then $G$ posseses a Haar measure, that is, a
positive Borel measure $\mu$ which is left invariant (i.e. $\mu(At)=\mu(A)$
whenever $t\in G$ and $A\subseteq G$ is measurable)   and nonzero on nonempty
open sets. In particular, if $G=\mathbb{R}^*:=\mathbb{R}-\{0\}$ (with
multiplicative structure),  then $\mu= \frac{dx}{|x|}$, and if $G=
\mathbb{R}^+$, then $\mu=\frac{dx}{x}$.

The convolution of two functions $f,g\in L^1(G)$  is defined as:
$$
(f*g)(x) = \int_G f(y) g(y^{-1}x) \, d\mu(y)
$$
where $y^{-1}$ denotes the inverse of $y$ in the group $G$.

With these definitions in mind, we are ready to recall the following improved
version of Young's inequality that will be needed in what follows:

\begin{theorem}\cite[Theorem 1.4.24]{G}
\label{young}
Let $G$ be a locally compact group with left Haar measure $\mu$ that satisfies
$\mu(A)=\mu(A^{-1})$ for all measurable $A\subseteq G$. Let
$1<p,q,s<\infty$ satisfy
$$
\frac{1}{q}+1 = \frac{1}{p}+\frac{1}{s}.
$$
Then, there exists a constant $B_{pqs}>0$ such that for all $f\in L^p(G,
\mu)$ and $g\in L^{s,\infty}(G, \mu)$ we have
\be
\|f * g\|_{L^q(G, \mu)}\le B_{pqr} \|g\|_{L^{s,\infty}(G, \mu)} \|f\|_{L^p(G,
\mu)}.
\label{young2}
\ee
\end{theorem}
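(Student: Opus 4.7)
The plan is to deduce Theorem \ref{young} from a weak-type endpoint estimate via the Marcinkiewicz real interpolation theorem. Fixing $g\in L^{s,\infty}(G,\mu)$ and, by homogeneity, assuming $\|g\|_{L^{s,\infty}}=1$, the operator $T_g f := f*g$ is linear in $f$. If one can establish weak-type estimates $T_g:L^{p_i}\to L^{q_i,\infty}$ at two pairs $(p_0,q_0)$ and $(p_1,q_1)$ lying on the line $1/q_i+1=1/p_i+1/s$ and bracketing the given exponent $p$, then Marcinkiewicz interpolation immediately produces the desired strong-type bound $T_g:L^p\to L^q$.

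The core step is therefore the weak-type endpoint: for every admissible $p$ (equivalently $q<\infty$), one should show $\|f*g\|_{L^{q,\infty}}\le C\|f\|_{L^p}$. To prove this, normalize $\|f\|_{L^p}=1$, fix a level $\alpha>0$, and split $g=g_{\le\lambda}+g_{>\lambda}$ at a truncation height $\lambda=\lambda(\alpha)$ to be determined. Two applications of classical (strong-type) Young's inequality on $G$, which are legitimate under the symmetry $\mu(A)=\mu(A^{-1})$ (so that $g$ and $\tilde g(x):=g(x^{-1})$ share all $L^r$ norms), yield
\begin{equation*}
\|f*g_{\le\lambda}\|_{L^\infty}\le\|g_{\le\lambda}\|_{L^{p'}},\qquad \|f*g_{>\lambda}\|_{L^p}\le\|g_{>\lambda}\|_{L^1}.
\end{equation*}
The layer-cake formula together with the weak bound $d_g(t)\le t^{-s}$ then gives $\|g_{\le\lambda}\|_{L^{p'}}\le C_1\lambda^{1-s/p'}$ (observing that $p'>s$ is precisely the condition $q<\infty$) and $\|g_{>\lambda}\|_{L^1}\le C_2\lambda^{1-s}$ (using $s>1$).

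Next, I choose $\lambda$ so that $\|g_{\le\lambda}\|_{L^{p'}}\sim\alpha$, which forces the inclusion $\{|f*g|>\alpha\}\subseteq\{|f*g_{>\lambda}|>\alpha/2\}$, and then Chebyshev's inequality combined with the $L^p$ bound on the second piece gives $d_{f*g}(\alpha)\le C\alpha^{-Q}$ for some exponent $Q$ determined by the choice of $\lambda(\alpha)\sim\alpha^{p'/(p'-s)}$. The main (and essentially only) obstacle is the exponent bookkeeping: one must verify algebraically that $Q=q$, which boils down to substituting this $\lambda$ into $-p+p(1-s)p'/(p'-s)$ and simplifying via $1/q=1/p+1/s-1$ and $p'=p/(p-1)$. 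Once the identity $Q=q$ is confirmed, the weak endpoint is established at every admissible $p$, and applying Marcinkiewicz interpolation between two such endpoints straddling the target $(p,q)$ finishes the proof of Theorem \ref{young}.
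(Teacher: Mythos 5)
The paper does not prove this statement---it is quoted directly from Grafakos \cite[Theorem 1.4.24]{G}---and your argument is precisely the standard proof given in that reference: truncate $g$ at a height $\lambda(\alpha)$, apply the two endpoint cases of classical Young's inequality (legitimate since inversion-invariance plus left-invariance of $\mu$ makes $G$ unimodular), optimize $\lambda$ to obtain the weak-type $(p,q)$ bound, and upgrade to strong type by Marcinkiewicz interpolation along the line $\frac{1}{q}+1=\frac{1}{p}+\frac{1}{s}$ (where $p<q$, so the off-diagonal theorem applies). The exponent bookkeeping you defer does check out: $-p+p(1-s)\frac{p'}{p'-s}=\frac{ps(1-p')}{p'-s}=-\frac{sp'}{p'-s}=-q$.
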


\begin{remark}
Notice that if $p=1$, and $g\in L^s(G,\mu)$, then  we can replace (\ref{young2})
by the classical Young's inequality, to obtain
$$
\|f * g\|_{L^q(G, \mu)}\le B_{pqr} \|g\|_{L^{s}(G, \mu)} \|f\|_{L^p(G, \mu)}.
$$
This can be used to prove the extension to the case $p=1$ of Theorem
\ref{main-theorem} (see Remark \ref{rem1}).
\end{remark}

\section{The 1-dimensional case}

Recall that we want to prove
\begin{equation}
\label{dim-1}
\| |x|^{-\beta} T_\gamma f\|_{L^q(\mathbb{R})} \le C \| f |x|^{\alpha}
\|_{L^p(\mathbb{R})}
\end{equation}
The key point in our proof is to write the above inequality as a convolution
inequality in the group $\mathbb{R}^*$ with the corresponding Haar measure $\mu=
\frac{dx}{|x|}$. Indeed, inequality \eqref{dim-1}
can be rewritten as
$$
\| |x|^{-\beta+\frac{1}{q}} T_{\gamma} f \|_{L^q(\mu)} \le C
\||x|^{\alpha+\frac{1}{p}} f \|_{L^p(\mu)}
$$

Now,
$$
|x|^{-\beta+\frac{1}{q}} T_\gamma f(x) = \int_{-\infty}^\infty
\frac{|x|^{-\beta+\frac{1}{q}} f(y)
|y|^{\alpha+\frac{1}{p}}}{|y|^{\gamma-1+\alpha+\frac{1}{p}}
|1-\frac{x}{y}|^{\gamma}} \frac{dy}{|y|} = ( h * g)(x)
$$
where $h(x)=f(x) |x|^{\alpha+\frac{1}{p}}, g(x)=
\frac{|x|^{-\beta+\frac{1}{q}}}{|1-x|^\gamma}$,
and we have used that $\gamma-1+\alpha+\frac{1}{p}= -\beta+\frac{1}{q}$.
Using Theorem \ref{young} we obtain

$$
\| |x|^{-\beta+\frac{1}{q}} T_{\gamma} f \|_{L^q(\mu)} \le C
\||x|^{\alpha+\frac{1}{p}} f \|_{L^p(\mu)} \|g(x)\|_{L^{s,\infty}(\mu)}
$$

where
$$
\frac{1}{q}=\frac{1}{p}+\frac{1}{s}-1
$$

Therefore, it suffices to check that $\|g(x)\|_{L^{s,\infty}(\mu)} < \infty$.
For this purpose, consider $\varphi \in C^{\infty}(\mathbb{R})$, supported in
$[\frac12, \frac32]$ and such that $0\le \varphi\le 1$ and $\varphi\equiv 1$ in
$(\frac34, \frac54)$. We split $g = \varphi g + (1-\varphi) g := g_1 + g_2$.

Clearly, $g_2 \in L^s(\mu)$, since the integrability condition at the origin for
$|g_2|^t$(with respect to the measure $\mu$) is $\beta<\frac{1}{q}$, and the
integrability condition when  $x \to\infty$ is $\frac{1}{q}-\beta-\gamma<0$,
which, under our assumptions on the exponents, is equivalent to
$\alpha<\frac{1}{p'}$.

Therefore,
$$
\mu(\{g_1 + g_2 > \lambda \}) \le \mu\left(\left\{g_1 > \frac{\lambda}{2}
\right\}\right) + \left( \frac{\|g_2\|_{L^s(\mu)}}{\lambda}\right)^s \le
\mu\left(\left\{g_1 > \frac{\lambda}{2} \right\}\right) + \frac{C}{\lambda^s}
$$
but,
$$
\mu\left(\left\{g_1 > \frac{\lambda}{2} \right\}\right) \le \mu\left(\left\{
\frac{C}{|1-x|^\gamma} > \lambda \right\}\right) = \mu\left(\left\{
\frac{C}{\lambda^\frac{1}{\gamma}} >|x-1| \right\}\right)\le
\frac{C}{\lambda^\frac{1}{\gamma}}\le \frac{C}{\lambda^s}
$$
as long as  $s\gamma \le 1$, that is $\gamma\le 1+\frac{1}{q}-\frac{1}{p}$,
which is equivalent to $\alpha+\beta \ge 0$. Hence, $g\in L^{s,\infty}(\mu)$ and
this concludes the proof.

\section{Proof of the weighted HLS theorem for radial functions}

In this Section we prove Theorem \ref{main-theorem}. The  main idea, as in the
one-dimensional case will be to write the fractional integral operator acting on
a radial function as a convolution in the multiplicative group $\mathbb{R}^+$
with Haar measure $\mu=\frac{dx}{x}$. For this purpose, we shall need the
following lemma.

\begin{lemma}
Let $x \in S^{n-1}=\{ x \in \R^n : |x|=1\}$ and consider an integral of the form:
$$ I(x) = \int_{S^{n-1}} f(x \cdot y) \; dy $$
(the integral is taken with respect to the
surface measure on the sphere), where $f:[-1,1] \to \R, f \in
L^1([-1,1],(1-t^2)^{(n-3)/2})$. Then, $I(x)$ is a constant independent of $x$
and moreover
$$ I(x)= \omega_{n-2} \int_{-1}^1f(t)  (1-t^2)^{\frac{n-3}{2}} dt $$
\label{lemma-integral-on-the-sphere}
where $\omega_{n-2}$ denotes the area of $S^{n-2}$.
\end{lemma}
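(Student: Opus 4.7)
The plan is to exploit the rotational invariance of surface measure on $S^{n-1}$ to reduce the integral to a one-dimensional computation after choosing a convenient pole.

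First I would show that $I(x)$ does not depend on $x \in S^{n-1}$. Given any two points $x,x' \in S^{n-1}$, pick an orthogonal matrix $R$ with $Rx = x'$. Using that $x'\cdot y = (Rx)\cdot y = x \cdot (R^{\top} y)$ and the rotation invariance of the surface measure, the change of variable $z = R^{\top} y$ gives $I(x') = I(x)$. Hence it suffices to evaluate $I$ at the ``north pole'' $e_n = (0,\dots,0,1)$, where $I(e_n) = \int_{S^{n-1}} f(y_n)\,dy$.

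Next I would parametrize $S^{n-1}$ by $y = (\sin\theta\,\omega,\,\cos\theta)$ with $\theta \in [0,\pi]$ and $\omega \in S^{n-2}$; then $y_n = \cos\theta$ and the surface element factors as $\sin^{n-2}\theta\,d\theta\,d\sigma(\omega)$, where $d\sigma$ is the surface measure on $S^{n-2}$. Setting $t=\cos\theta$, so that $dt = -\sin\theta\,d\theta$ and $\sin^{n-2}\theta\,d\theta = (1-t^2)^{(n-3)/2}\,dt$, the integral becomes
\[
 I(e_n) = \int_{S^{n-2}} d\sigma(\omega) \int_{-1}^{1} f(t)\,(1-t^2)^{(n-3)/2}\,dt
       = \omega_{n-2}\int_{-1}^{1} f(t)\,(1-t^2)^{(n-3)/2}\,dt,
\]
as claimed. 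The assumption $f \in L^1([-1,1],(1-t^2)^{(n-3)/2})$ is precisely what is needed to ensure that this one-dimensional integral (and hence $I(x)$ itself) converges absolutely, justifying the use of Fubini.

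There is no real obstacle here: the only point that requires a bit of care is the book-keeping for the Jacobian when passing from $\theta$ to $t$, particularly noting that the exponent drops from $n-2$ to $n-3$ because one power of $\sin\theta = (1-t^2)^{1/2}$ is absorbed into $dt$. The low-dimensional cases $n=1,2$ should also be checked for consistency: for $n=2$ the weight $(1-t^2)^{-1/2}$ appears and $\omega_0 = 2$ (counting the two points of $S^{0}$), while for $n=1$ the ``sphere'' degenerates and one verifies the formula directly.
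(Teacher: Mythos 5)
Your proof is correct. The first half coincides exactly with the paper's: both reduce to the north pole via a rotation $R$ with $Rx=\tilde x$ and the rotation invariance of surface measure. The only divergence is in the explicit computation of $\int_{S^{n-1}} f(y_n)\,dy$. The paper splits the sphere into the upper and lower hemispheres, writes each as the graph of $y_n=\pm\sqrt{1-|x|^2}$ over the unit ball in $\R^{n-1}$ (so the surface element contributes the graph Jacobian $(1-|x|^2)^{-1/2}$), and then passes to polar coordinates in the base to get $\omega_{n-2}\int_0^1 f(t)(1-t^2)^{(n-3)/2}\,dt$ on each half. You instead invoke the product parametrization $y=(\sin\theta\,\omega,\cos\theta)$ with surface element $\sin^{n-2}\theta\,d\theta\,d\sigma(\omega)$ and substitute $t=\cos\theta$, which handles both hemispheres at once and avoids the graph Jacobian. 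The two computations are equivalent in content; yours is marginally slicker but quotes the spherical factorization of the measure as known, whereas the paper's graph argument derives the Jacobian from scratch. Your remark that the $L^1$ hypothesis with weight $(1-t^2)^{(n-3)/2}$ is exactly what makes the reduced integral absolutely convergent is correct and worth keeping; the edge cases $n=1,2$ you flag are indeed the only places where the formula needs a separate (trivial) check.
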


\begin{proof}
First, observe that $I(x)$ is constant for all $x\in S^{n-1}$. Indeed, given
$\tilde x \in S^{n-1}$, there exists a rotation $R\in O(n)$ such that $\tilde x=
Rx$ and, therefore,
$$
I(\tilde x)= \int_{S^{n-1}} f(\tilde x \cdot y) \, dy =  \int_{S^{n-1}} f(Rx
\cdot y) \, dy = \int_{S^{n-1}} f(x \cdot R^{-1} y) \, dy = I(x).
$$
So, taking $x=e_n$, it suffices to compute $I(e_n)= \int_{S^{n-1}} f(y_n)\, dy$.
To this end, we split the integral in two and consider first the integral on the
upper-half sphere $(S^{n-1})^+$. Since $(S^{n-1})^+$ is the graph of the
function $g: \{ x\in \mathbb{R}^{n-1}: |x| <1\} \to (S^{n-1})^+, g(x)=
\sqrt{1-|x|^2}$, we obtain
$$
\int_{(S^{n-1})^+} f(y_n) \, dy = \int_{\{|x|<1\}} f(\sqrt{1-|x|^2})
\frac{1}{\sqrt{1-|x|^2}} \, dx
$$
using polar coordinates, this is
$$
\int_{S^{n-2}} \int_0^1 f(\sqrt{1-r^2}) \frac{1}{\sqrt{1-r^2}} r^{n-2} \, dr \,
dy = \omega_{n-2} \int_0^1 f(t) (1-t^2)^{\frac{n-3}{2}} \, dt.
$$
Analogously, one obtains
$$
\int_{(S^{n-1})^-} f(y_n) \, dy = \omega_{n-2} \int_{-1}^0 f(t)
(1-t^2)^{\frac{n-3}{2}} \, dt.
$$
This completes the proof.
\end{proof}

Now we can proceed to the proof of our main theorem.

Using polar coordinates,
$$ y = ry^\prime, r=|y|, y^\prime \in S^{n-1} $$
$$ x = \rho x^\prime, \rho = |x|, x^\prime \in S^{n-1} $$
and the identity
$$|x-y|^2 = |x|^2 - 2 |x||y| x^\prime \cdot y^\prime + |y|^2 $$
we write the fractional integral of a radial function $v(x)=v_0(|x|)$ as
$$T_\gamma v(x) = \int_0^\infty \int_{S^{n-1}}
\frac{v_0(r) r^{n-1} dr dy^\prime}
{(r^2-2r\rho x^\prime y^\prime + \rho^2)^{\gamma/2}} $$
Using lemma \ref{lemma-integral-on-the-sphere}, we have
that:
$$T_\gamma v(x) = \omega_{n-2} \int_0^\infty v_0(r) r^{n-1} \left\{
\int_{-1}^1 \frac{(1-t^2)^{(n-3)/2} }{(\rho^2-2 \rho r t + r^2)^{\gamma/2}}
\; dt \right\} dr $$
Now, we may write the inner integral as:
$$ \int_{-1}^1 \frac{(1-t^2)^{(n-3)/2} }{(\rho^2-2 \rho r t + r^2)^{\gamma/2}}
\; dt =\int_{-1}^1 \frac{(1-t^2)^{(n-3)/2} }{\rho^\gamma
\left[1-2 \left(\frac{r}{\rho}\right) t + \left(\frac{r}{\rho}\right)
^2\right]^{\gamma/2}}
\; dt  $$
Therefore,
$$ T_\gamma v(x)=  \omega_{n-2} \int_0^\infty v_0(r) r^n
 \frac{1}{\rho^\gamma} I_{\gamma,k} \left(\frac{r}{\rho}\right) \frac{dr}{r}
\label{fracint-radial} $$
where $k= \frac{n-3}{2} $, and, for $a\ge 0$
$$ I_{\gamma,k} (a)= \int_{-1}^1  \frac{(1-t^2)^{k} }{(1-2 a t +
a^2)^{\gamma/2}} \;dt $$
Notice that the denominator of this integral vanishes if $a=1$ and $t=1$ only.
Therefore, $I_{\gamma,k}(a)$ is well defined and is a continuous function for $a
\neq 1$.

This formula shows in a explicit way that $T_\gamma v$ is a radial function, and
can be therefore thought of as a function of $\rho$. Furtheremore, we observe
that as consequence of this formula,  $\rho^{\frac{n}{q}-\beta} T_\gamma v$
has the structure of a convolution on the multiplicative group $\R^+$:
$$
\rho^{\frac{n}{q}-\beta} T_\gamma v(x)
= \omega_{n-2} \int_0^\infty v_0(r) r^{n-\gamma+\frac{n}{q}}
\frac{\rho^{\frac{n}{q}-\beta}} {r^{\frac{n}{q}-\beta}}
I_{\gamma,k} \left(\frac{r}{\rho}\right) \frac{dr}{r}
$$
$$
= \omega_{n-2} \; (v_0 r^{n-\gamma-\frac{n}{q}-\beta}) *
(r^{\frac{n}{q}-\beta} I_{\gamma,k}(r))
 $$

Hence, using Theorem \ref{young} we get that
$$
 \| |x|^{-\beta} T_\gamma v \|_{L^q(\R^n)}
= \left( \omega_{n-1} \int_0^\infty |T_\gamma v(\rho)|^{q} \rho^{n-\beta q}
\; \frac{d\rho}{\rho} \right)^{1/q}
= \omega_{n-1}^{1/q} \; \| T_\gamma v(\rho) \rho^{\frac{n}{q}-\beta}
\|_{L^q(\mu)}
$$
$$
\leq \omega_{n-1}^{1/q} \omega_{n-2} \| v_0(r) r^{n-\gamma+\frac{n}{q}-\beta}
\|_{L^p(\mu)} \;
\| r^{\frac{n}{q}-\beta} I_{\gamma,k}(r) \|_{L^{s,\infty} (\mu)}
$$
provided that:
\be  \quad \frac{1}{p}+\frac{1}{s}-1 = \frac{1}{q}
\label{Young-conditions}
\ee

Using polar coordinates once again:
$$ \omega_{n-1}^{1/p} \; \| v_0(r) r^{n-\gamma+\frac{n}{q}-\beta} \|_{L^p(\mu)}
= \omega_{n-1}^{1/p} \; \left( \int_0^\infty |v_0(r)|^{p}
r^{\left(n-\gamma+\frac{n}{q}-\beta\right)p-n}
r^n \frac{dr}{r} \right)^{1/p}  $$
$$ = \| v_0 |x|^{n-\gamma+\frac{n}{q}-\beta-\frac{n}{p}} \|_{L^p(\R^n)} $$

But, by the conditions of our theorem,
$$
n-\gamma+\frac{n}{q}-\beta-\frac{n}{p} = \alpha.
$$
Therefore, it suffices to prove that

\be
\| r^{\frac{n}{q}-\beta} I_{\gamma,k}(r) \|_{L^{s,\infty} (\mu)}
 < +\infty. \label{integrability} \ee

For this purpose, consider $\varphi \in C^{\infty}(\mathbb{R})$, supported in
$[\frac12, \frac32]$ and such that $0\le \varphi\le 1$ and $\varphi\equiv 1$ in
$(\frac34, \frac54)$. We split $ r^{\frac{n}{q}-\beta} I_{\gamma,k}= \varphi
r^{\frac{n}{q}-\beta} I_{\gamma, k} + (1-\varphi) r^{\frac{n}{q}-\beta}
I_{\gamma,k} := g_1 + g_2$.

We claim that $g_2 \in L^s(\mu)$. Indeed, since $I_{\gamma,k}(r)$ is a
continuous function for $r\neq 1$, to analyze the behavior (concerning
integrability) of $g_2$ it suffices to consider the behavior  of
$r^{(\frac{n}{q}-\beta)s} |I_{\gamma,k}(r)|^s$ at
$r=0$, and when $r \to +\infty$.

Since $I_{\gamma,k}(r)$ has no singularity at $r=0$ ($I_{\gamma,k}(0)$ is
finite) the local integrability condition at $r=0$ is $\beta < \frac{n}{q} $.

When $r \to +\infty$, we observe that
$$
I_{\gamma,k}(r) =   \frac{1}{r^\gamma} \int_{-1}^1
\frac{(1-t^2)^{k} }{(r^{-2}-2 r^{-1} t + 1)^{\gamma/2}} \;dt
 $$
and using the bounded convergence theorem, we deduce that
$$
 I_{\gamma,k}(r) \sim \frac{C_k}{r^\gamma} \quad \hbox{as} \; r \to
+\infty \quad  (\hbox{with}\; C_k= \int_{-1}^1 (1-t^2)^{k} dt )
$$
It follows that the integrability condition at infinity is $ \frac{n}{q}-\beta -
\gamma   <0$, which, under our conditions on the exponents, is equivalent to
$\alpha < \frac{n}{p'}$.

We proceed now to $g_1$. To analyze its behavior near $r=1$, we shall need the
following lemma:

\begin{lemma}
\label{lemma42}
For $a \sim 1$ and $k \in \N_0$ or $k=m - \frac{1}{2}$ with $m \in \N_0$,
we have that
$$ |I_{\gamma,k}(a)| \leq
 \left\{
\begin{array}{lclcc}
C_{k,\gamma} & \mbox{if} &  \gamma<n-1 \\
C_{k,\gamma} \log\frac{1}{|1-a|}& \mbox{if} &  \gamma=n-1 \\
C_{k,\gamma} |1-a|^{-\gamma+2k+2} & \mbox{if} & n-1<\gamma<n\\
\end{array}
\right.
 $$
\end{lemma}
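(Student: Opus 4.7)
The plan is to analyze $I_{\gamma,k}(a)$ for $a$ close to $1$ by localizing around the only point where the denominator can degenerate, namely $t=1$. Using the algebraic identity
$$1-2at+a^2 = (1-a)^2 + 2a(1-t),$$
I see that $1-2at+a^2$ is bounded below by a positive constant away from $t=1$ whenever $a$ is close to $1$, so the contribution from a set of the form $\{t \le 1-\delta\}$ to $I_{\gamma,k}(a)$ is uniformly bounded by a constant $C_{k,\gamma}$ (using that $(1-t^2)^k$ is integrable in all the cases covered by the hypothesis $k\in\N_0$ or $k=m-\tfrac12$). I would therefore introduce a smooth cutoff concentrated near $t=1$ and reduce the problem to estimating the localized piece.

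On that localized piece I would change variables $u=1-t$ and use $(1-t^2)^k=u^k(2-u)^k \asymp 2^k u^k$ and $a\sim 1$, so that up to multiplicative constants the task is to bound
$$J(a) \;:=\; \int_0^{\delta} \frac{u^{k}}{\bigl((1-a)^2 + u\bigr)^{\gamma/2}}\,du.$$
The next and crucial step is the scaling substitution $u = (1-a)^2 s$, which produces the factorization
$$J(a) \;\asymp\; |1-a|^{\,2k+2-\gamma}\int_0^{\delta/(1-a)^2} \frac{s^{k}}{(1+s)^{\gamma/2}}\,ds.$$
In particular all the $a$-dependence is now isolated either in the prefactor or in the upper limit.

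The three cases of the lemma now correspond to the three possible behaviors of the remaining integral as $|1-a|\to 0$, which hinge on whether the improper integral $\int_0^\infty s^k(1+s)^{-\gamma/2}\,ds$ converges; since the integrand decays like $s^{k-\gamma/2}$, convergence holds iff $k-\gamma/2<-1$, i.e.\ iff $\gamma>2k+2=n-1$. If $\gamma>n-1$ the integral converges to a constant and $J(a)\lesssim |1-a|^{2k+2-\gamma}$, matching the third line. If $\gamma=n-1$ the tail is logarithmic, $\int_1^{M}s^{-1}\,ds\sim \log M\sim \log(1/|1-a|)$, while the prefactor $|1-a|^{2k+2-\gamma}=1$, yielding the logarithmic bound. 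If $\gamma<n-1$ the integral grows like $(\delta/(1-a)^2)^{k-\gamma/2+1}$, whose $|1-a|$-power exactly cancels the prefactor, so $J(a)$ is bounded.

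The main obstacle will be the bookkeeping around the scaling step, namely checking that the substitution $u=(1-a)^2 s$ does yield the clean factorization above (one has to absorb factors of $2a$ using $a\sim 1$) and that the contribution of the cutoff region and of $(2-u)^k$ is genuinely harmless in each of the three regimes; once that is in place the three cases follow by the elementary calculation sketched above.
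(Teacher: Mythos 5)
Your proposal is correct, and it takes a genuinely different and more streamlined route than the paper. The paper treats the two parities of $n$ separately: for integer $k$ it integrates by parts $k$ times (essentially a Legendre/Gegenbauer-type identity, so that the boundary terms vanish and the derivative $\frac{d^k}{dt^k}(1-t^2)^k$ is a bounded polynomial), and for half-integer $k$ it interpolates via Cauchy--Schwarz between the integer cases $I_{\gamma,m}$ and $I_{\gamma,m+1}$, supplemented by a separate differentiation-in-$a$ argument when the exponent is too negative and by an ad hoc treatment of $k=-\tfrac12$. Your argument replaces all of this with the single identity $1-2at+a^2=(1-a)^2+2a(1-t)$, localization near $t=1$, and the scaling $u=(1-a)^2s$, which isolates the $a$-dependence in the prefactor $|1-a|^{2k+2-\gamma}$ and in the upper limit $\delta/(1-a)^2$; the three regimes of the lemma then fall out of the elementary dichotomy for $\int_0^\infty s^k(1+s)^{-\gamma/2}\,ds$ (convergent iff $\gamma>2k+2=n-1$, logarithmically divergent at equality, power-divergent below, with the power exactly cancelling the prefactor). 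This is uniform in $k$ (any $k>-1$ works, so both $k\in\N_0$ and $k=m-\tfrac12$ are covered at once), avoids the case analysis entirely, and the bookkeeping you flag is genuinely harmless: $2a\asymp 2$ lets you absorb the factor $2a$ into the constant, $(2-u)^k$ is bounded above and below on $[0,\delta]$ for $\delta<2$, and the contribution of $\{t\le 1-\delta\}$ is a uniform constant which is dominated by each of the three claimed right-hand sides for $a$ close to $1$. The only thing your sketch buys less of is the explicit constant structure (the paper's integration by parts gives slightly more information, e.g.\ the near-identity $I_{\gamma,k}(a)\sim 2a\log(\frac{1+a}{1-a})^2$ in the critical case), but for the purposes of the lemma your upper bounds are all that is needed.
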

\begin{proof}
Assume first that $k\in\N_0$ and $-\frac{\gamma}{2}+k > -1$ (that is,
$0<\gamma<n-1$). Then,

$$
I_{\g,k}(1)\sim\int_{-1}^1 \frac{(1-t^2)^k}{(2-2t)^{\frac{\gamma}{2}}} \, dt
\sim C \int_{-1}^1 \frac{(1-t)^k}{(1-t)^{\frac{\gamma}{2}}} \, dt
$$
Therefore,  $I_{k, \gamma}$ is bounded.

If $-\frac{\gamma}{2}+k =-1$, (that is, $\gamma=n-1$) then

$$
I_{\g,k}(a)\sim\int_{-1}^1 (1-t^2)^k
\frac{d^k}{dt^k}\left\{(1-2at+a^2)^{-\frac{\g}{2}+k}\right\}\, dt
$$
Integrating by parts $k$ times (the boundary terms vanish),
$$
I_{\g,k}(a)\sim\left|\int_{-1}^1 \frac{d^k}{dt^k}\left\{(1-t^2)^k\right\}
(1-2at+a^2)^{-\frac{\g}{2}+k}\, dt\right|
$$
But $\frac{d^k}{dt^k}\left\{(1-t^2)^k\right\}$ is a polynomial of degree $k$
and therefore is bounded in $[-1,1]$ (in fact, it is up to a constant the
classical Legendre polynomial). Therefore,

$$
I_{k, \gamma}(a) \sim 2a \log\left(\frac{1+a}{1-a}\right)^2 \le C \log
\frac{1}{|1-a|}.
$$

Finally, if  $-\frac{\gamma}{2}+k <-1$ (that is, $n-1<\gamma<n$), then
integrating by parts as before,
$$
I_{\g,k}(a)\le C_k\int_{-1}^{1}(1-2at+a^2)^{-\frac{\g}{2}+k}\, dt
$$

$$
I_{k, \gamma}(a) \sim (1-2at+a^2)^{-\frac{\g}{2}+k+1}|_{t=-1}^{t=1}
\le C_{k,\g} |1-a|^{-\g+2k+2}
$$

This finishes the proof if $k\in \mathbb{N}_0$(i.e. if $n$  is odd).

We proceed now to the case $k=m+\frac12$, $m\in \mathbb{N}_0$. Assume first that
$-\frac{\gamma}{2}+m+1<-1$. Then,
$$
I_{\g,k}(a)=\int_{-1}^1 (1-t^2)^k (1-2at+a^2)^{-\frac{\g}{2}}\, dt
$$
$$
=\int_{-1}^1 (1-t^2)^{\frac12 m}(1-2at+a^2)^{-\frac{\g}{4}}
(1-t^2)^{\frac12(m+1)} (1-2at+a^2)^{-\frac{\g}{4}}\, dt
$$
and applying the Cauchy-Schwarz inequality we get that
$$
I_{\g,k}(a) \leq I_{\g,m}(a)^{\frac12} I_{\g,m+1}(a)^{\frac12}
$$
and,  using the bound for the case in which $k$ is an integer for $I_{\g,m}(a)$
and  $I_{\g,m+1}(a)$, we conclude that, $I_{k, \gamma}(a) \le C
|1-a|^{-\gamma+2m+3}= C |1-a|^{-\gamma+2k+2}$

If $-\frac{\gamma}{2}+k\le -1$, then
$$
|I'_{\gamma, k}(a)|\le C \int_{-1}^1
\frac{(1-t^2)}{(1-2at+a^2)^{\frac{\gamma}{2}+1}} \, dt = C I_{\gamma+2,k}(a)
$$
But, $-\frac{\gamma+2}{2}+k+\frac{1}{2}= -\frac{\gamma}{2}+k-\frac12 < -1$,
therefore, $I_{\gamma+2,k}$ can be bounded as in the previous case to obtain
$I_{\gamma+2,k}(a)\le C |1-a|^{-\frac{\gamma}{2}+k}$. Using this bound, and the
fact that $-\frac{\gamma}{2}+k<-1$, we obtain
$$
I_{\gamma,k}(a) = \int_0^a I'_{\gamma,k}(s) \, ds \le C
(1-s)^{-\frac{\gamma}{2}+k+1} \big|_0^a \le C |1-a|^{-\frac{\gamma}{2}+k+1}.
$$

Finally, if $-\frac{\gamma}{2}+k=-1$,
$$
I_{\gamma,k}(a)\le C \int_0^a \frac{1}{1-s} \, ds = C \log \frac{1}{|1-a|}.
$$

It remains to check the case $k=-\frac12$
$$
I_{\gamma, -\frac{1}{2}}(a) = \int_{-1}^0
\frac{(1-t^2)^{-\frac{1}{2}}}{(1-2at+a^2)^\frac{\gamma}{2}} \, dt + \int_0^1
\frac{(1-t^2)^{-\frac{1}{2}}}{(1-2at+a^2)^\frac{\gamma}{2}} \, dt = I  + II
$$

Since $\gamma>0$,
$$
I \le \int_{-1}^0 \frac{dt}{(1+t)^{\frac12}}  = 2
$$
$$
II \le \int_0^1 \frac{(1-t)^{-\frac12}}{(1-2at+a^2)^{\frac{\gamma}{2}}} \, dt =
-2 \int_0^1 \frac{\frac{d}{dt}[(1-t)^{\frac12}]}{(1-2at+a^2)^{\frac{\gamma}{2}}}
\, dt
$$
$$
\le 2a\gamma \int_0^1 \frac{(1-t^2)^{\frac12}}{(1-2at+a^2)^{\frac{\gamma}{2}+1}}
\, dt \le C I_{\gamma+2,\frac12}
$$
\end{proof}

Now we can go back to the study of $g_1$. We shall split the proof into three
cases, depending on whether $\gamma$ is less than, equal to or greater than
$n-1$.
\begin{enumerate}
\item[i.] Assume first that $0< \gamma<n-1$. Then
$|r|^{(-\beta+\frac{n}{q})s} |I_{\gamma,k}|^s$ is bounded when $r\sim 1$, and,
therefore, $\|g_1\|_{L^s(\mu)}< +\infty$.

\item[ii.] Consider now the case $\gamma=n-1$. Since in this case
$$
|I_{\gamma,k}(r)| \le C \log\frac{1}{|1-r|},
$$
we conclude, as before, that $\|g_1\|_{L^s(\mu)}< +\infty$

\item[iii.] Finally, we have to consider the case $n-1<\gamma<n$. In this
case,
$$
|I_{\gamma,k}(r)| \leq C |r-1|^{-\gamma+2k+2} = C |1-r|^{-\gamma+n-1}
 $$
Therefore,
$$
\mu\left(\left\{g_1 > \frac{\lambda}{2} \right\}\right) \le \mu\left(\left\{
\frac{C}{|1-x|^{\gamma-n+1}} > \lambda \right\}\right) = \mu\left(\left\{
\frac{C}{\lambda^\frac{1}{\gamma-n+1}} >|x-1| \right\}\right)
$$
$$
\le \frac{C}{\lambda^\frac{1}{\gamma-n+1}}\le \frac{C}{\lambda^s}
$$
as long as  $s(\gamma-n+1) \le 1$, which is equivalent to $\alpha+\beta \ge
(n-1)(\frac{1}{p}-\frac{1}{q})$.
 Therefore, $\|g_1\|_{L^{s,\infty}(\mu)}< +\infty$.
\end{enumerate}

\begin{flushright}
$\Box$
\end{flushright}

\begin{remark} The following example shows that for $n=3$ the condition
$\alpha+\beta \ge (n-1)(\frac{1}{q}-\frac{1}{p})$ is necessary.

Assume that  $\alpha+\beta <(n-1)(\frac{1}{q}-\frac{1}{p})$. Then, by Remark
\ref{remark-alfabeta}, $\gamma>n-1$.

Since $\frac{1}{q}=\frac{1}{p}+\frac{1}{s}-1$, we obtain
$\gamma-n+1>\frac{1}{s}$ and, therefore, by Lemma \ref{lemma42}, for $n=3$ and $r \sim 1$,
$I_{\gamma,k}(r)\sim \frac{1}{|1-r|^{\frac{1}{s}+\varepsilon}}$ for some
$\varepsilon>0$.

Fix $\eta$ such that $\eta p >1$ and let
$$
 f(r)=\frac{\chi_{[\frac12,\frac32]}(r)}{|r-1|^{\frac{1}{p}} \log
(\frac{1}{|r-1|})^\eta}
$$

Then $f\in L^p(\mu)$ and, for $r>1$,
$$
(I_{\gamma,k}* f)(r) \ge  \int_r^\frac32
\frac{t^{\frac{1}{s}+\varepsilon}}{t^{\frac{1}{s}+\varepsilon}
|\frac{r}{t}-1|^{\frac{1}{s}+\varepsilon} |t-1|^{\frac{1}{p}} \log
(\frac{1}{|t-1|})^\eta} \, dt
$$
$$
\ge \int_r^\frac32 \frac{1}{(t-r)^{\frac{1}{s}
+\varepsilon}(t-1)^\frac{1}{p} (\log\frac{1}{|r-1|})^\eta} \, dy \ge \frac{1}{(\log
\frac{1}{|r-1|})^\eta} \int_r^\frac32
\frac{dy}{(t-1)^{\frac{1}{s}+\frac{1}{p}+\varepsilon}}
$$
$$
\sim \frac{1}{(\log\frac{1}{|r-1|})^\eta |r-1|^{\frac{1}{q}+\varepsilon}} \not\in L^q
$$

Recall now that for a radial function,
$$
\rho^{\frac{n}{q}-\beta} T_\gamma f_0(\rho) = f_0 r^{\frac{n}{p}+\alpha} *
r^{\frac{n}{q}-\beta} I_{\gamma,k}(r)
$$

Therefore, defining $f_0= f(|x|) |x|^{-\frac{n}{p}-\alpha}$ we have,   
$\|f _0|x|^\alpha\| _{L^p}< \infty$ but $T_\gamma f |x|^{-\beta} \not \in L^q$.
\end{remark}

\section{An application to weighted imbedding theorems}

Consider the fractional order Sobolev space
$$
H^s(\R^n)= \{u \in L^2(\R^n) : (-\Delta)^{s/2}u \in L^2(\mathbb{R}^n) \}  \,
(s\ge 0)
 $$

As an application of our main theorem, we will prove a  weighted imbedding
theorem for  $H_{rad}^s (\R^n)$, the subspace of radially symmetric functions of
$H^s(\R^n)$.

\begin{theorem}
Let $0<s<\frac{n}{2}$,\,\, $2<q<  2^*_c := \frac{2(n+c)}{n-2s}$.
Then, we have the compact imbedding
$$
H^s_{rad} (\mathbb{R}^n) \subset L^q(\mathbb{R}^n,|x|^c dx)
$$
provided that $ -2s < c < \frac{(n-1)(q-2)}{2}$.
\end{theorem}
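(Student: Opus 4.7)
The plan is to first establish the continuous embedding $H^s_{rad}(\mathbb{R}^n)\hookrightarrow L^q(\mathbb{R}^n,|x|^c\,dx)$ via Theorem \ref{main-theorem}, and then upgrade to compactness by controlling the behaviour at the origin and at infinity separately.

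\medskip

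\emph{Continuous embedding.} For $u\in H^s_{rad}$, write $f=(-\Delta)^{s/2}u\in L^2$, so that up to an irrelevant constant $u$ is the Riesz potential $T_{n-2s}f$. Plugging $p=2$, $\gamma=n-2s$, $\alpha=0$, $\beta=-c/q$ into Theorem \ref{main-theorem}, the dimensional relation forces $q=2^*_c:=2(n+c)/(n-2s)$, the condition $\beta<n/q$ reads $c>-n$ (which is implied by $c>-2s$), and the radial admissibility $\alpha+\beta\ge(n-1)(1/q-1/p)$ reads exactly $c\le(n-1)(q-2)/2$. Thus Theorem \ref{main-theorem} yields
$$
\|u\|_{L^{2^*_c}(|x|^c\,dx)}\le C\,\|u\|_{\dot H^s}.
$$
For the subcritical range $2<q<2^*_c$, one interpolates this critical endpoint estimate with the trivial Sobolev embedding $H^s\hookrightarrow L^2(\mathbb{R}^n)$ via weighted H\"older (Stein-Weiss interpolation of $L^p$ spaces with change of measure): the target pair $(q,|x|^c)$ lies on the interpolation segment connecting $(2,1)$ and some $(2^*_{c_1},|x|^{c_1})$, and the strict inequalities on $c$ guarantee that such an auxiliary $c_1$ can be chosen inside the admissible region of Theorem \ref{main-theorem}.

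\medskip

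\emph{Compactness.} Take $(u_k)\subset H^s_{rad}$ bounded and extract $u_k\rightharpoonup u$. To show $u_k\to u$ strongly in $L^q(|x|^c\,dx)$, combine three ingredients via an $\varepsilon/3$ argument:

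\begin{enumerate}
\item[i.] (\emph{Tail at infinity}) Radial functions in $H^s_{rad}$ satisfy a Strauss-type decay estimate $|u(x)|\le C|x|^{-(n-1)/2}\|u\|_{H^s}$ for $|x|\ge 1$ (for $s\ge 1/2$; an integral substitute is available otherwise). Splitting $|u|^q=|u|^2\cdot|u|^{q-2}$ and using the pointwise bound on $\{|x|>R\}$,
$$
\int_{|x|>R}|u_k|^q|x|^c\,dx\le C\,R^{\,c-(n-1)(q-2)/2}\|u_k\|_{H^s}^{q-2}\|u_k\|_{L^2}^2\to 0
$$
uniformly in $k$ as $R\to\infty$, thanks to the strict inequality $c<(n-1)(q-2)/2$.

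\item[ii.] (\emph{Mass near the origin}) The strict inequality $c>-2s$ together with a Hardy-Sobolev inequality for $H^s$ and the critical embedding yields $\int_{|x|<\delta}|u_k|^q|x|^c\,dx\le C\delta^{\kappa}$ uniformly in $k$, for some $\kappa=\kappa(s,c,q,n)>0$.

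\item[iii.] (\emph{Rellich on annuli}) On each annulus $A_{\delta,R}=\{\delta<|x|<R\}$, the weight $|x|^c$ is bounded away from $0$ and $\infty$, and since $q<2^*\le 2n/(n-2s)$ the classical Rellich-Kondrachov theorem gives $u_k\to u$ strongly in $L^q(A_{\delta,R})$, hence in $L^q(A_{\delta,R},|x|^c\,dx)$.
\end{enumerate}

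\medskip

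\emph{Main obstacle.} The delicate step is (ii): for $s\ge 1/2$ one can adapt a Strauss-type bound near the origin, but for $s<1/2$ pointwise control fails and one must argue purely by integral methods, using both strict inequalities $c>-2s$ and $q<2^*_c$ simultaneously to absorb the singular weight $|x|^c$ at the origin into the available fractional regularity.
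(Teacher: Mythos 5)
Your continuous--embedding step is essentially the paper's: write $u$ as a Riesz potential of $f=(-\Delta)^{s/2}u\in L^2$, apply Theorem \ref{main-theorem} at the critical exponent $2^*_c$, and interpolate with the unweighted $L^2$ bound via H\"older. One correction there: since $f=(-\Delta)^{s/2}u$, you have $u=c\,T_{n-s}f$, not $T_{n-2s}f$; with $\gamma=n-2s$ the dimensional relation of Theorem \ref{main-theorem} gives $q=2(n+c)/(n-4s)$ rather than $2^*_c$. With $\gamma=n-s$ the rest of your bookkeeping ($c>-n$ from $\beta<n/q$, and $c\le (n-1)(q-2)/2$ from the radial admissibility condition) is correct.

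The compactness step is where you genuinely diverge from the paper, and where the real gaps are. The paper never decomposes $\R^n$ and never uses pointwise decay: it picks $2<r<q<\tilde q<2^*_c$, writes $q=\theta r+(1-\theta)\tilde q$, and applies H\"older to get
$\int |x|^c|u_k|^q\,dx\le \bigl(\int |u_k|^r\,dx\bigr)^{\theta}\bigl(\int |x|^{\tilde c}|u_k|^{\tilde q}\,dx\bigr)^{1-\theta}$
with $\tilde c=c/(1-\theta)$; for $\theta$ small the pair $(\tilde q,\tilde c)$ is again admissible, so the second factor is bounded by the embedding just proved, while the first factor tends to zero by Lions' compact embedding $H^s_{rad}(\R^n)\subset L^r(\R^n)$. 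Your $\varepsilon/3$ scheme has two holes as written. First, the Strauss-type bound $|u(x)|\le C|x|^{-(n-1)/2}\|u\|_{H^s}$ in step (i) fails for $s\le 1/2$, while the theorem is asserted for all $0<s<n/2$; the phrase ``an integral substitute is available otherwise'' is not a proof, and the natural integral substitute is precisely the weighted H\"older manipulation above. Second, step (ii) near the origin is only asserted, and you yourself flag it as the main obstacle; it does not follow from $c>-2s$ alone without carrying out a H\"older argument against the critical weighted norm with an auxiliary exponent. Since both missing pieces are exactly what the two-line interpolation disposes of uniformly in $s$, you should either supply those two estimates in full (including the range $s\le 1/2$) or replace the three-region decomposition by the interpolation argument.
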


\begin{remark}
The case $s=1$ of this lemma was already proved in the work of W. Rother
\cite{Rother}, while the general case was already proved in a completely
different way in our work \cite{DDD}.
\end{remark}
\begin{remark}
The unweighted case $c=0$ gives the classical Sobolev
imbedding (in the case of radially symmetric functions). In that case, the
compactness of the imbedding $H^{s}_{rad}(\R^n) \subset L^q(\R^n)$ under the
conditions $0<s<\frac{n}{2}$ and $2<q< \frac{2n}{n-2s}$ was proved
by P. L. Lions \cite{Lions}.
\end{remark}

\begin{proof}
Let $u\in H^s_{rad}(\mathbb{R}^n)$.  Then, $f:= (-\Delta)^{s/2}u \in L^2$, and,
recalling the relation between the negative powers of the Laplacian and the
fractional integral (see, e.g., \cite[Chapter V]{St}), we obtain
$$T_{n-s}f = C (-\Delta)^{-s/2}f = C u.$$

 Then, it follows from Theorem \ref{main-theorem} that
$$
\| |x|^{\frac{c}{q}} u \|_{L^{2^*_c}(\mathbb{R}^n)} = C \| |x|^{\frac{c}{q}}
T_{n-s} f \|_{L^{2^*_c}(\mathbb{R}^n)} \le C \|f \|_{L^2(\mathbb{R}^n)} \le C \|
u\|_{H^s(\mathbb{R}^n)}
$$

Therefore, writing $q= 2 \nu + (1-\nu) 2^*_c$, and using H\"older's
inequality, we obtain
$$
\||x|^{\frac{c}{q}} u\|_{L^q(\mathbb{R}^n)} \le \| |x|^{\frac{c}{q}}
u\|_{L^{2^*_c}(\mathbb{R}^n)}^\nu \|u\|_{L^2(\mathbb{R}^n)}^{1-\nu} \le C
\|u\|_{H^s(\mathbb{R}^n)}
$$

It remains to prove that the imbedding $H^s_{rad} (\mathbb{R}^n)
\subset L^q(\mathbb{R}^n,|x|^c dx)$ is compact. The proof can be made in the
same way as that in \cite[Theorem 2.1]{DDD}. Indeed,
it suffices to
show that if $u_n \to 0 $ weakly in $H_{rad}^s(\R^n)$, then $u_n \to 0$
strongly in $L^q(\R^n,|x|^c \; dx)$. Since
$$2<q<2^*_{c}=\frac{2(n+c)}{n-2s}$$ by hypothesis, it is possible
to choose $r$ and $\tilde{q}$ so that $2<r<q<\tilde{q}<2^*_{c}$.
We write $q = \theta r + (1-\theta) \tilde{q}$ with $\theta \in
(0,1)$ and, using H\"older's inequality, we have that

\be \int_{\R^n} |x|^{c} |u_n|^q \, dx \le \left( \int_{\R^n}
|u_n|^r \, dx \right)^{\theta} \left( \int_{\R^n} |x|^{\tilde{c}}
|u_n|^{\tilde{q}} \, dx \right)^{1-\theta}
\label{ineq-interpolacion} \ee where
$\tilde{c}=\frac{c}{1-\theta}$. By choosing $r$ close enough to
$2$ (hence making $\theta$ small), we can fulfill the conditions
$$ \tilde{q} <\frac{2(n+\tilde{c})}{n-2s},\quad -2s < \tilde{c} <
\frac{(n-1)(\tilde{q}-2)}{2}. $$
Therefore, by the imbedding that we have already established:
$$ \left( \int_{\R^n} |x|^{\tilde{c}} |u_n|^{\tilde{q}} \, dx
\right)^{1/\tilde{q}}
\leq C \| u_n \|_{H^s} \leq C^\prime.$$

Since the imbedding $H_{rad}^s(\R^n) \subset L^{r}(\R^n)$ is compact by Lions
theorem \cite{Lions}, we
have that $u_n \to 0$ in $L^{r}(\R^n)$. From (\ref{ineq-interpolacion}) we
conclude
that $u_n \to 0$ strongly in $L^q(\R^n,|x|^c\; dx)$, which shows that the
imbedding in our
theorem is also compact. This concludes the proof.

\end{proof}

\end{document}